\pgfplotsset{compat=newest}
\newtheorem{theorem}{Theorem}[section]
\newtheorem{lemma}[theorem]{Lemma}
\theoremstyle{definition}
\theoremstyle{remark}
\newtheorem{proposition}[theorem]{Proposition}
\numberwithin{equation}{section}
\newcommand{\R}{\mathbb{R}}
\title{Optimal Nonergodic Sublinear Convergence Rate of Proximal Point Algorithm for Maximal Monotone Inclusion Problems}
\author{%
Guoyong Gu%
\thanks{Department of Mathematics, Nanjing University, Nanjing, 210093, China.}
\thanks{Email: ggu@nju.edu.cn. This author was supported by the NSFC grant 11671195.}
\and
Junfeng Yang\footnotemark[1]
\thanks{Email: jfyang@nju.edu.cn. This author was supported by the NSFC grant  11771208.}
}
\date{}
\begin{document}
\maketitle

\begin{abstract}
  We establish the optimal nonergodic sublinear convergence rate of the proximal point algorithm for maximal monotone inclusion problems.
  First, the optimal bound is formulated by the performance estimation framework, resulting in an infinite dimensional nonconvex optimization problem, which is then equivalently reformulated as a finite dimensional semidefinite programming (SDP) problem. By constructing a feasible solution to the dual SDP, we obtain an upper bound on the optimal nonergodic sublinear rate. Finally, an example in two dimensional space is constructed to provide a lower bound on the optimal nonergodic sublinear rate. Since the lower bound provided by the example matches exactly the upper bound obtained by the dual SDP, we have thus established the worst case nonergodic sublinear convergence rate which is optimal in terms of both the order as well as the constants involved. Our result sharpens the understanding of the fundamental proximal point algorithm.

\bigskip

\noindent\textbf{Keywords:}
proximal point algorithm,
maximal monotone operator inclusion,
sublinear convergence rate,
performance estimation framework,
semidefinite programming,
optimal iteration bound
\end{abstract}

\section{Introduction}
\label{Sec:Introduction}

The term ``proximal point'' was originally coined by Moreau \cite{Mor65bsmf,Mor62} and was firstly introduced to solve optimization problems by Martinet \cite{Mar70,Mart72}. Later, the Proximal Point Algorithm (PPA) was refined by Rockafellar in \cite{Roc76a} for solving maximal monotone operator inclusion problems. Since then, PPA has been
playing fundamental roles in the understanding, design and analysis of optimization algorithms. It was shown in \cite{Roc76b} that the classical method of multipliers of Hestenes \cite{Hes69} and Powell \cite{Pow69} is a dual application of the PPA. Similarly, it was shown in \cite{EB92} that
the Douglas-Rachford operator splitting method \cite{DR56,LM79} is also an application of the PPA to a special splitting operator.
As for the rate of convergence, it was shown in \cite{Roc76a} that PPA converges at least linearly under some regularity conditions, e.g., Lipschitz continuity of the inverse operator at the origin and monotonically nondecreasingness of the proximal parameters.
For minimizing a proper lower semicontinuous convex function, nonasymptotic $O(1/N)$ sublinear convergence rate measured by function values has been established in \cite{Gul91sicon}, where $N$ denotes the iteration counter. See also \cite{Gul92siopt,BT09} for some accelerated proximal-point-like methods designed for solving convex optimization problems by using Nesterov type acceleration technique \cite{Nest83}.
%
For maximal monotone operator inclusion problems, an $O(1/N)$ sublinear convergence rate result measured by fixed point residual was derived in \cite[Proposition 8]{BL78} for the original PPA without regularity assumptions. The same nonasymptotic convergence rate was recently established in  \cite[Theorem 3.1]{HY12c} for the Douglas-Rachford operator splitting method, a generalization of PPA to treating the sum of two maximal monotone operators.


In this paper, we investigate the optimal nonergodic sublinear convergence rate of PPA for maximal monotone inclusion problems.
Our idea is to formulate the optimal rate by using the performance estimation framework originally proposed by Drori and Teboulle \cite{DT14mp,DT16mp}
and recently refined by Kim and Fessler ~\cite{KF16MPA,KF17JOTA,KF18SIOPTa,KF18SIOPTb,KF18arxiv} and Taylor et al.~\cite{THG17mp,THG17siopt,dKGT17ol,THG18jota}.
%
%
The performance estimation problem is expressed as an infinite dimensional nonconvex optimization problem, which is then equivalently
reformulated as a finite dimensional convex SDP. By constructing a dual feasible SDP solution, we are able to obtain an upper bound on the optimal rate. We then construct a two dimensional example to show that the upper bound obtained from the dual SDP feasible solution is nonimprovable, neither in the order nor in any of the constants involved.
Specifically, we show that the optimal rate is ${1 \over (1 + {1\over N})^N (N+1)}$ (when the underlying Euclidean space has dimension greater or equal to $2$), where $N$ denotes the iteration counter.
Compared to \cite[Proposition 8]{BL78} and \cite[Theorem 3.1]{HY12c}, the improvement here is approximately a constant factor of $\exp(-1)$. This improvement, though might be insignificant in practice,   eliminates the gap between the known bound and the exact worst case bound. Recently, an alike analysis for convex composite optimization problems has been given in \cite{THG17siopt}, where the quality of an approximate solution is measured by function value residual. In this paper, our focus is the more general maximal monotone inclusion problems, and the quality measure is the fixed point residual, see, e.g., \cite{BL78,HY12c,DavY16sc,DavY17mor}. After a short release of our work on arxiv, an accelerated PPA, which achieves $O(1/N^2)$ rate of convergence measured by fixed point residual also, is constructed in \cite{Kim19} by using the same performance estimation idea \cite{DT14mp,DT16mp}.

\subsection{Notation}
In this paper, we reside ourselves in the $n$-dimensional Euclidean space $\R^n$, with inner product denoted by $\langle\cdot,\cdot\rangle$ and the induced norm $\|\cdot\| = \sqrt{\langle\cdot,\cdot\rangle}$, though all the analysis can be easily extended to any finite dimensional real Euclidean spaces.  The superscript ``$T$'' denotes the matrix or vector transpose operation. For a matrix $M$, integers $p$, $q$, $s$ and $t$ such that $p<q$ and $s<t$, we let $M(p:q,s:t)$ be the submatrix of $M$ located in between the $p$-th and the $q$-th rows and the $s$-th and the $t$-th columns.
Similarly, $M(:,s:t)$ (resp., $M(p:q,:)$) represents submatrix of $M$ containing the $s$-th to the $t$-th columns (resp., the $p$-th to the $q$-th rows). The $(i,j)$-th element of $M$ is denoted by $M_{i,j}$. For two matrices $A$ and $B$ of the same size, we let $\langle A, B\rangle = \sum_{i,j} A_{i,j}B_{i,j}$ be the trace inner product. The set of real symmetric matrices of order $n$ is denoted by $\mathbb{S}^n$. The fact that a matrix $A\in\mathbb{S}^n$ is positive semidefinite is denoted by $A\succeq 0$.
The set of maximal monotone operators on $\mathbb{R}^n$ is denoted by ${\cal M}$.
The graph of a set-valued operator $T$ on $\R^n$, i.e., the set $\{(x,y)\in\R^n\times\R^n \mid y\in T(x)\}$, is denoted by $\text{graph}(T)$. Both the identity matrix of appropriate order and the identity operator will be denoted by $I$.
Other notation will be specified later.

\subsection{Organization}
The rest of this paper is organized as follows.
In section \ref{sc:PPA_pep}, we present PPA and the Performance Estimation Problem (PEP).
Then, the PEP is equivalently reformulated as an SDP in section \ref{sc:reform_pep}.
An upper bound on the worst case sublinear rate is given in section \ref{sc:upper_bound} via constructing a dual feasible SDP solution,
followed by a lower bound given in section \ref{sc:example} via constructing an example.
Finally, some concluding remarks are given in section \ref{sc:conclusions}.

\section{PPA and PEP}\label{sc:PPA_pep}
Let $A\in {\cal M}$ be a maximal monotone operator.
A fundamental problem in convex analysis and related fields is to find a zero of $A$, i.e., find $w^*\in\mathbb{R}^n$ such that
$0\in A(w^*)$, and PPA is a primary method for solving this problem \cite{Roc76a}. Let $\lambda > 0$ be a constant. The resolvent operator of $A$ is defined by $J_{\lambda A} = (I + \lambda A)^{-1}$. Minty \cite{Minty62} first proved that the resolvent operator of any maximal monotone operator is single-valued and everywhere defined. Denote the set of zeros of $A$ by $A^{-1}(0)$. It is elementary to show that $w^*\in A^{-1}(0)$ if and only if $w^* = J_{\lambda A}(w^*)$. The PPA is in fact an iteration scheme constructed based on this fixed point equation. Given $w^0\in\mathbb{R}^n$, PPA generates a sequence of points $\{w^k\}_{k=1}^\infty$ via
\begin{equation}\label{PPA}
  w^{k+1} = J_{\lambda A}(w^k), \quad k = 0,1,2,\ldots
\end{equation}
Let $N > 0$ be an integer. After $N$ iterations, $w^N$ is generated and its quality is naturally measured by $\|e(w^N,\lambda)\|$, where
\begin{equation}\label{def:e}
e(w,\lambda) := w - J_{\lambda A}(w).
\end{equation}
The quantity  $\|e(w^N,\lambda)\|$ is referred to as fixed point residual and has been frequently used in the literature, see, e.g., \cite{DavY16sc,DavY17mor}. It was shown in \cite[Eq. (3.1)]{HY12c} that, for fixed $\lambda >0$, $\|e(w^N,\lambda)\|$ is monotonically nonincreasing, based upon which a worst-case bound $1/(N+1)$ was established. Specifically, for all $N>0$, it holds that
\begin{eqnarray}\label{bnd-HY12c}
{\|e(w^N,\lambda)\|^2 \over \|w^0-w^*\|^2} \leq \frac{1}{N+1}.
\end{eqnarray}
In fact, the focus of \cite{HY12c} is the Douglas-Rachford operator splitting method \cite{LM79}. For PPA, the bound \eqref{bnd-HY12c} was first established in \cite[Proposition 8]{BL78}.
The result \eqref{bnd-HY12c} is nonasymptotic because it holds for all $N>0$. It is also referred to as a nonergodic result since the latest point $w^N$ is used on the left hand side of \eqref{bnd-HY12c}, instead of the average of all history points.
The main contribution of this work is to improve the bound ${1 \over N+1}$ on the right hand side of \eqref{bnd-HY12c} to its optimal value ${1 \over (1 + {1\over N})^N(N+1)}$. For this purpose, we introduce the performance estimation framework recently developed in \cite{DT14mp,THG17mp,THG17siopt,dKGT17ol,THG18jota}.

Consider the following class of operators
\[
{\cal P} = \{ A \in {\cal M} \mid A^{-1}(0) \neq \emptyset\}.
\]
Let $N>0$ be an integer, $\lambda >0$ be a parameter, and $A\in {\cal P}$. Initialized at $w^0\in\R^n$, the first $N$ iterations of PPA generates a unique sequence of points $\{w^1,\ldots, w^N\}$. We denote this procedure by
\[
\{w^1,    \ldots, w^N\} = \text{PPA}(A,w^0,\lambda,N).
\]
The quantity on the left hand side of \eqref{bnd-HY12c} is dependent on $(A,w^0,w^*,\lambda,N)$. For convenience, we define
\begin{equation}\label{def:epsilon}
  \varepsilon(A,w^0,w^*,\lambda,N) := \frac{\|e(w^N,\lambda)\|^2}{\|w^0 - w^*\|^2} = \frac{\|w^N - w^{N+1}\|^2}{\|w^0 - w^*\|^2},
\end{equation}
where $w^{N+1} = J_{\lambda A}(w^{N})$.
In the performance estimation framework,
the exact worst case complexity bound or optimal convergence rate, denoted by $\zeta(N)$, is formulated as
the following PEP:
\begin{eqnarray}\label{PEP0}
\zeta(N) :=
  \sup_{A, w^0, w^*, \lambda}
  \left\{\varepsilon(A,w^0,w^*,\lambda,N)
  \left|
  \begin{array}{l}
    A \in {\cal P}, \;     w^0\in\mathbb{R}^n, \; 0 \in A(w^*), \; \lambda > 0,  \medskip \\
     \{w^1,    \ldots, w^N \} = \text{PPA}(A,w^0,\lambda,N)
  \end{array}
  \right.
  \right\}.
\end{eqnarray}
In the following, we argue that one can always set $\lambda=1$, $w^*=0$ and $w^0\in\R^n$ such that $\|w^0\|=1$ in \eqref{PEP0} without affecting the value of $\zeta(N)$.

{\bf Fact 1}: For any $\lambda > 0$ and operator $T\in {\cal M}$, define $T_\lambda = T\circ (\lambda I)$. Then, $T\in {\cal P}$ if and only if $T_{\lambda}\in {\cal P}$, and $u = J_{\lambda T} z$ if and only if $u/\lambda = J_{T_\lambda}(z/\lambda)$.

Based on Fact 1, it is easy to show that the sequence $\{w^1,\ldots,w^N,w^{N+1}\}$ generated by  $\text{PPA}(A,w^0,\lambda,N+1)$ also satisfies
\[
\{w^1/\lambda,\ldots,w^N/\lambda,w^{N+1}/\lambda\} = \text{PPA}(A_{\lambda},w^0/\lambda,1,N+1).
\]
Furthermore, it is apparent that $0 \in A(w^*)$ if and only if $0 \in A_\lambda(w^*/\lambda)$.
As a result, we have
\[\varepsilon(A,w^0,w^*,\lambda,N) = \varepsilon(A_\lambda,w^0/\lambda,w^*/\lambda,1,N).\]
Since $A\in {\cal P}$ if and only if $A_{\lambda}\in {\cal P}$ and the focus is the worst case bound $\zeta(N)$, it is thus without loss of generality to assume $\lambda = 1$.

{\bf Fact 2}: For any $\gamma > 0$ and operator $T\in {\cal M}$, define $T^\gamma = (\frac{1}{\gamma}I) \circ T \circ (\gamma I)$. Then, $T\in {\cal P}$ if and only if $ T^\gamma\in {\cal P}$, and $u = J_{T} z$ if and only if $u/\gamma = J_{ T^\gamma} (z/\gamma)$.

Based on Fact 2, it is easy to show that the sequence $\{w^1,\ldots,w^N,w^{N+1}\}$ generated by  $\text{PPA}(A,w^0,1,N+1)$ also satisfies
\[
\{w^1/\gamma,\ldots,w^N/\gamma,w^{N+1}/\gamma\} = \text{PPA}(A^\gamma,w^0/\gamma,1,N+1).
\]
Furthermore, $0\in A(w^*)$ if and only if $0\in A^\gamma(w^*/\gamma)$, and  $A\in {\cal P}$ if and only if $A^\gamma  \in {\cal P}$.
These together imply that, for any $\gamma > 0$, $(A, w^0, w^*)$ (together with $\lambda = 1$) is a solution of \eqref{PEP0} if and only if
$(A^\gamma, w^0/\gamma, w^*/\gamma)$ is a solution.
Therefore, it is also without loss of generality to assume $\|w^0 - w^*\| = 1$.

{\bf Fact 3}: For any operator $T\in {\cal M}$ and $u,v\in \R^n$. Define $T' = T(\cdot + u) + v$. Then, $T\in{\cal P}$ if and only if $T'\in{\cal P}$. Furthermore, $J_{T'} = J_T(\cdot + u - v) - u$.

Based on Fact 3, it is easy to show that $w^* = J_A(w^*)$ if and only if $0 = J_{A'} (0)$, where $A' = A(\cdot+w^*)$. Therefore, it is without loss of generality either to assume $w^* = 0$.

In the following, we always assume $\lambda = 1$, $w^*=0$ and $\|w^0-w^*\| = \|w^0\| = 1$.
As a result, the PEP given in \eqref{PEP0} can be simplified to
\begin{eqnarray}\label{PEP1}
\zeta(N) :=
  \sup_{A, w^0}
  \left\{ \|w^N -  w^{N+1}\|^2
  \left|
  \begin{array}{l}
    A \in {\cal P}, \;     \|w^0\| = 1, \; 0 \in A(0),   \medskip \\
     \{w^1,    \ldots, w^{N+1}\} = \text{PPA}(A,w^0,1,N+1)
  \end{array}
  \right.
  \right\}.
\end{eqnarray}
The PEP given in \eqref{PEP1} is an infinite dimensional nonconvex optimization problem due to the presence of the constraint $A\in {\cal P}$.
This seemingly makes it intractable. However, this is not the case. In fact, we can reformulate \eqref{PEP1} equivalently as a finite dimensional convex SDP by using the maximal monotone operator interpolation and extension theorem \cite[Fact 1]{RTBG18}\cite[Theorem 20.21]{BC17book}, which also serves as the basis of the list of recent works  \cite{THG17mp,THG17siopt,dKGT17ol,THG18jota}.

\section{SDP reformulation of PEP}\label{sc:reform_pep}
\subsection{Operator interpolation}\label{sc:OperInter}
Let $K$ be an index set and ${\cal Q}$ be a set of operators on $\R^n$. A set $\{(x_j,y_j)\}_{j\in K} \subseteq \R^n\times \R^n$ is said to be ${\cal Q}$-interpolable if there exists an operator $T\in{\cal Q}$  such that $\{(x_j,y_j)\}_{j\in K} \subseteq \text{graph}(T)$.
Recall that ${\cal M}$ denotes the set of maximally monotone operators on $\R^n$.
According to \cite[Fact 1]{RTBG18} and the monotone extension theorem \cite[Theorem 20.21]{BC17book}, the set
$\{(x_j,y_j)\}_{j\in K}$ is ${\cal M}$-interpolable if and only if
\[
\langle x_i - x_j,\ y_i - y_j\rangle \geq 0, \quad \forall i, j\in K.
\]

Let $S := \{(w^{k+1}, w^k - w^{k+1})\}_{k=0}^{N} \cup \{(0,0)\}$.
According to the PPA formula \eqref{PPA}, the procedure $\{w^1,\ldots,w^{N+1}\} = \text{PPA}(A,w^0,1,N+1)$ together with the requirement $0 \in A(0)$ in \eqref{PEP1} can be restated as $S \subseteq \text{graph}(A)$ for some $A\in {\cal P}$, i.e., the set $S$ is ${\cal P}$-interpolable. Since ${\cal P} \subseteq {\cal M}$, the set $S$ is also ${\cal M}$-interpolable. As a result, the following set of inequalities are satisfied:
\begin{subequations}\label{inter-conditions}
\begin{eqnarray}\label{inter-cond-1}
  \langle w^{i} - w^{j}, (w^{i-1} - w^{i}) - (w^{j-1} - w^{j})\rangle \geq 0, & \forall \, 1\leq i < j \leq N+1, \\
\label{inter-cond-2}
  \langle w^{i},  w^{i-1} - w^{i} \rangle \geq 0, & \forall \, 1\leq i\leq N+1.
\end{eqnarray}
\end{subequations}
On the other hand, if the inequalities in \eqref{inter-conditions} are satisfied, then the set $S$ is also ${\cal M}$-interpolable
\cite[Fact 1]{RTBG18}, \cite[Theorem 20.21]{BC17book}, i.e., there exists $A\in {\cal M}$ such that
$$S = \{(w^{k+1}, w^k - w^{k+1})\}_{k=0}^{N} \cup \{(0,0)\}\subseteq \text{graph}(A).$$
Since $(0,0) \in S$, it follows that $A\in {\cal P}$.
In summary, the infinite dimensional constraint $A \in {\cal P}$, as well as the conditions $0 \in A(0)$ and
$\{w^1,    \ldots, w^{N+1}\} = \text{PPA}(A,w^0,1,N+1)$ can be replaced by the set of inequalities given in \eqref{inter-conditions}.
As long as the dimension $n$ is larger than or equal to $N+2$ (the dimension of the Gram matrix in the next subsection), this replacement is no more than a reformulation while not a relaxation, which means that the exact worst case bound $\zeta(N)$ defined in \eqref{PEP1} can be computed, see, e.g., \cite[Theorem 5]{THG17mp}\cite[Proposition 2.6]{THG17siopt}\cite[Lemma 1]{RTBG18} for similar discussions.

\subsection{Grammian representation}\label{sc:gram}
In the following, we let $P = (w^0,w^1,\ldots,w^N,w^{N+1}) \in \R^{n\times (N+2)}$ and $X = P^TP \succeq 0$.
Then, $w^{i-1} = Pe_{i}$ for $i=1,2,\ldots,N+2$, where $e_{i}$
denotes the $i$-th unit vector of length $N+2$.

For $1\leq i < j \leq N+1$, we let $\xi_{ij} := (e_{i}-e_{i+1}) - (e_{j}-e_{j+1})$.
Then, the set of conditions in \eqref{inter-cond-1} can be equivalently stated as
\begin{eqnarray}\label{def:Aij}
\left\{
\begin{array}{l}
\langle w^{i} - w^{j},\ (w^{i-1} - w^{i}) - (w^{j-1} - w^{j})\rangle
=   \langle P(e_{i+1} - e_{j+1}),\ P\xi_{ij}\rangle  = \langle   A_{i,j},\ X\rangle/2 \geq 0,   \medskip\\
A_{i,j} :=  (e_{i+1}-e_{j+1})\xi_{ij}^T + \xi_{ij}(e_{i+1}-e_{j+1})^T, \quad 1\leq i < j \leq N+1.
\end{array}
\right.
\end{eqnarray}
Note that here $A_{i,j}$ is a symmetric matrix dependent on $i$ and $j$, instead of the $(i,j)$-th entry of matrix $A$. Though, it is a little bit abuse of notation, no confusion occurs.
%
On the other hand, the set of conditions in \eqref{inter-cond-2} can be restated as
\begin{eqnarray}\label{def:Bi}
\left\{
\begin{array}{l}
\langle w^{i},  w^{i-1} - w^{i} \rangle
=   \langle Pe_{i+1}, P(e_i - e_{i+1}) \rangle  = \langle   B_i, X\rangle/2 \geq 0,   \medskip\\
B_{i} :=  e_{i+1} (e_{i} - e_{i+1})^T + (e_{i} - e_{i+1})e_{i+1}^T, \quad 1\leq i \leq N+1.
\end{array}
\right.
\end{eqnarray}
Furthermore, it is apparent that the constraint $\|w^0\|=1$ is equivalent to
\begin{eqnarray}\label{def:E11}
\langle E_{11}, X\rangle := \langle e_1e_1^T,X \rangle = 1.
\end{eqnarray}
Finally, the objective function can be rewritten as
\begin{eqnarray}\label{def:C}
\|w^N -  w^{N+1}\|^2 = \langle (e_{N+1}-e_{N+2})(e_{N+1}-e_{N+2})^T, X       \rangle :=\langle C, X\rangle.
\end{eqnarray}

\subsection{The SDP reformulation}
With the discussions in sections \ref{sc:OperInter} and \ref{sc:gram}, when $n \geq N+2$, the PEP given in \eqref{PEP1} can be equivalently reformulated as the following SDP:
\begin{eqnarray}\label{SDP}
\max_{X\in {\mathbb S}^{N+2}} \left\{
\langle C, X\rangle
\left|
  \begin{array}{ll}
\langle   A_{i,j}, X\rangle \geq 0, & 1\leq i < j \leq N+1, \smallskip\\
\langle B_{i}, X \rangle \geq 0, & 1\leq  i \leq N+1, \smallskip \\
\langle E_{11}, X\rangle = 1, & X \succeq 0
  \end{array}
  \right.
  \right\},
\end{eqnarray}
where the matrices $\{A_{i,j}: 1\leq i < j \leq N+1\}$, $\{B_{i}: 1\leq i  \leq N+1\}$, $E_{11}$
and $C$ are defined, respectively, in \eqref{def:Aij}, \eqref{def:Bi}, \eqref{def:E11} and \eqref{def:C}.
When $n < N+2$, the SDP \eqref{SDP} is only a relaxation of PEP \eqref{PEP1}, in which case the optimum value of \eqref{SDP} is an upper bound of $\zeta(N)$. In summary, for any $n>0$, it suffices to consider SDP \eqref{SDP} to obtain an upper bound of $\zeta(N)$, which is our main purpose of considering PEP.

We use SeDuMi \cite{Stu99oms} to solve the above SDP for $N=1, 2, \ldots, 100$.
The worst case iteration bound just computed is compared in Figure~\ref{figpep} to the nonergodic convergence rate \cite[Proposition]{BL78}\cite[Theorem 3.1]{HY12c}, i.e., the one given in \eqref{bnd-HY12c}.

\begin{figure}
\centering\begin{tikzpicture}[scale=1.4]
\begin{axis}[xmin=0, xmax=100, xlabel={$N$}, ylabel={Performance measure factor}]
\addplot[mark=, blue] table[x=N,y=f1]{result.dat};
\addlegendentry{PEP}
\addplot[dashed, samples=100, domain=1:100] plot (\x, {1/(\x+1)});
\addlegendentry{$\frac{1}{N+1}$}
\end{axis}
\end{tikzpicture}
\caption{Comparison between the worst case bound computed by PEP (the lower curve) and the convergence rate of Douglas-Rachford operator splitting method (the dashed curve).
}
\label{figpep}
\end{figure}
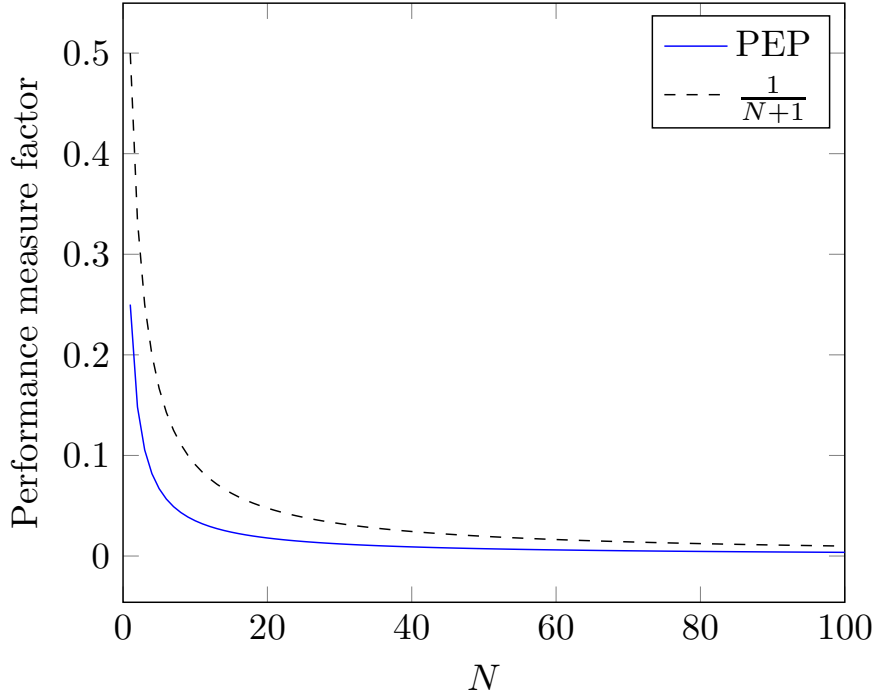

\section{An upper bound on $\zeta(N)$}\label{sc:upper_bound}
In this section, an upper bound on $\zeta(N)$ is given by considering the following Lagrangian dual of \eqref{SDP}:
%
%
\begin{eqnarray}\label{SDP-dual}
\min_{\{\lambda_{i,j}\}, \{\mu_i\}, \eta} \left\{
\eta \left|
  \begin{array}{l}
\eta E_{11} -C - \sum_{1\leq i<j\leq N+1}  \lambda_{i,j} A_{i,j} - \sum_{i=1}^{N+1} \mu_{i}  B_{i} \succeq 0, \smallskip \\
\lambda_{i,j}\geq 0, \quad 1\leq i<j\leq N+1, \smallskip \\
\mu_{i}\geq 0, \quad i=1,2,\ldots,N+1, \quad \eta\in\R
  \end{array}
\right.
\right\}.
\end{eqnarray}
Here, $\lambda_{i,j}$, $\mu_i$ and $\eta$ are Lagrange multipliers.
%
For convenience, we define
\begin{equation}\label{def:M}
M := \eta E_{11}-C - \sum_{1\leq i<j\leq N+1}  \lambda_{i,j} A_{i,j} - \sum\nolimits_{i=1}^{N+1} \mu_{i}  B_{i} \in \mathbb{S}^{N+2}.
\end{equation}
The idea is to construct a feasible solution to the dual SDP \eqref{SDP-dual} and thus obtain an upper bound on $\zeta(N)$. Indeed, we have successfully constructed one, which is given in the following proposition.

\begin{proposition}\label{prop:dual-feasible}
Let $\{A_{i,j}: 1\leq i<j \leq N+1\}$, $\{B_{i}: 1\leq i  \leq N+1\}$, $E_{11}$
and $C$ be defined, respectively, in \eqref{def:Aij}, \eqref{def:Bi}, \eqref{def:E11} and \eqref{def:C}.
For any integer $N > 0$, the following is a feasible solution to the dual SDP \eqref{SDP-dual}:
\begin{subequations}\label{def:dual-sol}
\begin{align}
\lambda_{i,j} &=
\begin{cases}
{N^{N - i} \over (N+1)^{N - i}} {i \over N+1},\quad & j-1=i =1, 2, \ldots, N,\\
0, &\text{otherwise},
\end{cases}\\
\mu_{i} &= {N^{N - i} \over (N+1)^{N - i}} {N - i \over (N+1)^2}, \quad i = 1, 2, \ldots, N, \\
\mu_{N+1} &= {1 \over N+1}, \\
\eta &= {N^{N} \over (N+1)^{N}} {1 \over N+1}.
\end{align}
\end{subequations}
\end{proposition}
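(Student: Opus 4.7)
The plan is to verify the two kinds of dual constraints in \eqref{SDP-dual}: nonnegativity of the multipliers and positive semidefiniteness of the matrix $M$ in \eqref{def:M}. Nonnegativity is immediate from the closed-form formulas in \eqref{def:dual-sol}, since $N \geq i \geq 1$ in each expression, so the real work is to show $M \succeq 0$.

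First I would substitute the proposed multipliers into \eqref{def:M} and compute $M$ entrywise. Several structural observations keep this tractable. Since $\lambda_{i,j}$ is supported only on $j = i+1$, among the $A_{i,j}$ only the $N$ matrices $A_{i,i+1}$ enter the sum, and each has three-point support around indices $i+1, i+2, i+3$; each $B_i$ has two-point support on indices $i, i+1$; and $E_{11}$, $C$ are rank one with support at $\{1\}$ and $\{N+1, N+2\}$ respectively. Consequently $M$ should have a tridiagonal sparsity pattern, and each entry should admit a closed-form expression in terms of the geometric factor $\rho_i := N^{N-i}/(N+1)^{N-i}$ appearing throughout \eqref{def:dual-sol}.

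The central task is to show that this tridiagonal matrix is positive semidefinite. I would pursue this by producing an explicit decomposition. The cleanest option, motivated by the matching lower bound in section \ref{sc:example} and complementary slackness, is that $M$ may in fact be rank one: if so, its null-space direction can be read off from the worst-case trajectory $P$, and $M$ can be written as a scalar multiple of an outer product $v v^T$, after which PSD reduces to verifying that the entries match coefficient by coefficient. Failing rank one, a banded factorization $M = L D L^T$ with lower-bidiagonal $L$ and nonnegative diagonal $D$ is tractable thanks to the tridiagonal pattern; the pivots then satisfy a short recurrence in $\rho_i$ that can be checked to stay nonnegative.

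The main obstacle will be the algebraic bookkeeping. The exponents in $N^{N-i}/(N+1)^{N-i}$ are tuned precisely so that $\eta E_{11} - C$ is exactly reconstructed by the telescoping action of $\sum_i \lambda_{i,i+1} A_{i,i+1} + \sum_i \mu_i B_i$; verifying that these telescopes collapse correctly across each of the tridiagonal entries, including the boundary entries contributed by $E_{11}$, $C$, and by $\mu_{N+1}$, is where the subtle cancellations live. Once $M$ is in closed form, checking $M \succeq 0$ reduces to a finite set of elementary nonnegativity identities in the $\rho_i$, and the claim follows.
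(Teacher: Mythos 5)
Your high-level outline is sensible -- nonnegativity is trivial and the real content is $M \succeq 0$, and a banded factorization obtained by successive Schur complements is indeed the route the paper takes -- but there are three concrete errors that would derail the execution.

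\begin{enumerate}[(i)]
\item \textbf{The sparsity pattern is pentadiagonal, not tridiagonal.} Each $A_{i,i+1}$ is supported on the $3\times 3$ block with rows and columns $\{i,i+1,i+2\}$ (see the paper's \eqref{ABC-nz}); adjacent terms $A_{i,i+1}$ and $A_{i+1,i+2}$ overlap in two indices, so $\sum_i \lambda_{i,i+1} A_{i,i+1}$ contributes entries out to the second off-diagonal. The paper's \eqref{def:M-elements} records the nonzero entries $M_{i,i+2} = \lambda_i$, which are not zero. A tridiagonal ansatz for $M$, and the accompanying lower-bidiagonal $L$ in an $LDL^T$ factorization, would be the wrong shape; for a pentadiagonal $M$ the factor $L$ would need to be lower tridiagonal.
\item \textbf{The rank-one conjecture is false.} Complementary slackness with the worst-case primal solution actually points the other way: the extremal trajectory in section \ref{sc:example} lives in $\mathbb{R}^2$, so the corresponding Gram matrix $X$ has rank $2$, and $\langle M, X\rangle = 0$ with $M, X \succeq 0$ forces $MX = 0$, giving $M$ a null space of dimension at least $2$, i.e., rank at most $N$. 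The paper's sequence of Schur complements confirms this: all pivots $M^{[k]}_{k,k}$ are strictly positive until the final reduced block $M^{[2]}$, which is the $2\times 2$ zero matrix, so $\operatorname{rank} M = N$ for $N\ge 2$ (and $M$ is positive \emph{definite}, rank $3$, for $N=1$). There is no vector $v$ with $M = vv^T$ beyond the trivial case.
\item \textbf{The hard part is left as ``algebraic bookkeeping.''} There is no telescoping collapse: $\eta E_{11} - C$ is \emph{not} exactly reconstructed by the $A$- and $B$-sums; the residual $M$ is a nonzero PSD matrix of rank $N$. The paper's proof supplies a closed-form expression for the intermediate Schur complements $M^{[k]}$ (equation \eqref{formula_(k)}) and verifies it by induction on $k$ from $N+1$ down to $5$, plus explicit checks of $M^{[4]}, M^{[3]}, M^{[2]}$ and the base case $N=1$. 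That inductive closed form is precisely where the ``subtle cancellations'' you defer actually reside, and without it the argument does not close.
\end{enumerate}
So while the fallback plan -- banded $LDL^T$ via Schur complements, pivots positive -- is the correct skeleton and agrees with the paper's method, as written the proposal has the wrong band width, proposes a rank structure that cannot hold, and omits the induction that constitutes the proof.
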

\noindent Apparently, all the quantities in (\ref{def:dual-sol}a)-(\ref{def:dual-sol}c) are nonnegative.
It remains to show that the choices of the Lagrange multipliers specified in \eqref{def:dual-sol} are such that the matrix $M$ defined in \eqref{def:M} is positive semidefinite,
which is tedious and postponed to the Appendix.
Hence, it follows from weak duality that
\begin{equation}\label{zeta-N-ub}
\zeta(N) \leq \eta = {1 \over (1+{1\over N})^{N}} {1 \over N+1}, \quad \forall\, N >0.
\end{equation}
Therefore, we have obtained an upper bound on $\zeta(N)$.
We emphasize that this upper bound on $\zeta(N)$ is nonasymptotic in the sense that it holds for any $N>0$. Furthermore, \eqref{zeta-N-ub} holds for any $n>0$, where $n$ denotes the dimension of the underlying Euclidean space.

\section{A lower bound on $\zeta(N)$}\label{sc:example}
In order to obtain a lower bound on $\zeta(N)$, an example in $\R^2$ is constructed in this section.
Let $\theta\in (0, \pi/2)$, and define a rotation matrix in $\R^2$ as
\begin{equation}\label{def:theta}
\Theta :=
\begin{pmatrix}
  \cos\theta & -\sin\theta \\
  \sin\theta & \cos\theta
\end{pmatrix}.
\end{equation}
It is easy to check that $\Theta^T = \Theta^{-1}$ and for any integer $k$,
\begin{equation}\label{Theta_property}
   \Theta^k + \Theta^{-k} =
\begin{pmatrix}
  \cos(k\theta) & -\sin(k\theta) \\
  \sin(k\theta) & \cos(k\theta)
\end{pmatrix}
+
\begin{pmatrix}
  \cos(k\theta) & \sin(k\theta) \\
  -\sin(k\theta) & \cos(k\theta)
\end{pmatrix}
=
2\cos(k\theta) I.
\end{equation}
To simplify the notation, we denote $\beta = \cos\theta$.
The concrete example we construct is
\begin{alignat}{2}
\notag
w^0 &= (1,0)^T, \\
\notag
w^1 &= \beta \Theta w^0, &   u^1 &= w^0 - w^1,    \\
\notag
w^2 &= \beta \Theta w^1 = \beta^2 \Theta^2 w^0, & u^2 &= w^1 - w^2, \\
\label{example}
   &  \cdots  & &\cdots \\
\notag
w^{N+1} &= \beta \Theta w^N = \beta^{N+1} \Theta^{N+1} w^0,\qquad & u^{N+1} &= w^N - w^{N+1}, \\
\notag
w^* &= 0, & u^* &=  0.
\end{alignat}
The geometry of this example for $N=5$ and $N=100$ is shown in Figure~\ref{Fig:example}.
%
\begin{figure}[h!]
\begin{center}
\begin{tikzpicture}[scale=0.4]
\tikzset{help lines/.style={dotted}}
\draw[help lines, dotted, step=1] (-5, 0) grid (11, 9);
\coordinate (O) at (0, 0);
\coordinate (w0) at (10.00000000000000, 0);
\coordinate (w1) at (8.33333333333333, 3.72677996249965);
\coordinate (w2) at (5.55555555555556, 6.21129993749942);
\coordinate (w3) at (2.31481481481482, 7.24651659374932);
\coordinate (w4) at (-0.77160493827160, 6.90144437499935);
\coordinate (w5) at (-3.21502057613169, 5.46364346354116);
\coordinate (w6) at (-4.71536351165981, 3.35486879340247);
\draw [-] (w6) node [right] {$w^6$}-- (w5) node [above] {$w^5$};
\draw [-] (w5) -- (w4) node [above] {$w^4$};
\draw [-] (w4) -- (w3) node [above] {$w^3$};
\draw [-] (w3) -- (w2) node [above] {$w^2$};
\draw [-] (w2) -- (w1) node [right] {$w^1$};
\draw [-] (w1) -- (w0) node [right] {$w^0$};
\draw [dashed] (O) node [above] {$O$} -- (w0) -- cycle;
%
\draw [dashed, blue] (w1) -- (O) -- (w2);
\draw [dashed, blue, -] ($(w0)!2!(w1)$) -- (w1);
\draw [dashed, ultra thick, red, -latex] ($(w0)!2!(w1)$) -- (w2) node [right] {\small $u^1-u^2$};
\coordinate (v1) at ($(O)!0.95!(w1)$);
\coordinate (v2) at ($(v1)!1!270:(w1)$);
\coordinate (v3) at ($(w0)!(v2)!(w1)$);
\draw (v1) -- (v2) -- (v3);
\coordinate (vv1) at ($(O)!0.95!(w2)$);
\coordinate (vv2) at ($(vv1)!1!270:(w2)$);
\coordinate (vv3) at ($(w1)!(vv2)!(w2)$);
\draw (vv1) -- (vv2) -- (vv3);
\end{tikzpicture}
\begin{tikzpicture}[scale=0.63]
\begin{axis}[xmin=-1, xmax=1, xlabel={}, ylabel={}]
\addplot[mark=, thick, blue] table[x=x, y=y]{w100.dat};
\end{axis}
\end{tikzpicture}
\end{center}
\caption{Geometry of the constructed example. Left: $N=5$. Right: $N=100$.}\label{Fig:example}
\end{figure}
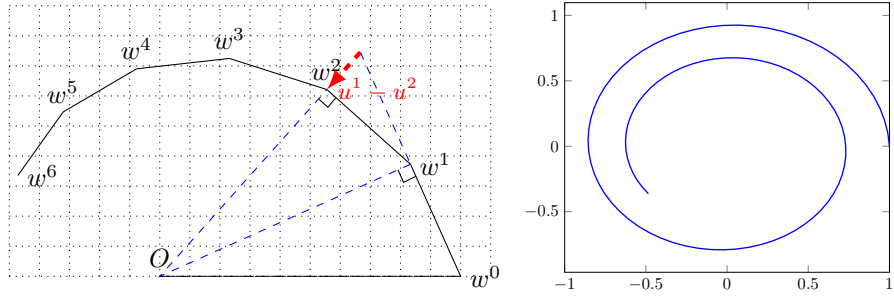
\begin{proposition}
Let $N>0$ be an integer and $w^0$, $\{ (w^k,u^k): k=1,\ldots,N+1\}$ and $(w^*,u^*)$  be defined in \eqref{example} with $\theta\in (0, \pi/2)$ and $\Theta$ given in \eqref{def:theta}.
Then, there hold
\begin{subequations}
   \label{example-inequalities}
\begin{eqnarray}
   \langle w^j - w^i, u^j - u^i\rangle = 0, &   \forall\, 1\leq i < j \leq N+1, \\
   \langle w^i - w^*, u^i - u^*\rangle = 0, &  \forall\, 1\leq i \leq N+1.
\end{eqnarray}
\end{subequations}
\end{proposition}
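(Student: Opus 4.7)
The plan is to reduce everything to inner products of the form $\langle \Theta^a w^0, \Theta^b w^0\rangle$, which become scalar cosines because $w^0 = e_1$ and $\Theta$ is an orthogonal rotation. From the recursion \eqref{example}, I have the closed-form expressions
\[
w^k = \beta^k \Theta^k w^0, \qquad u^k = w^{k-1} - w^k = \beta^{k-1}\Theta^{k-1}(I - \beta \Theta)\, w^0,
\]
for $k = 1, \ldots, N+1$. Using $\Theta^T = \Theta^{-1}$, every inner product $\langle \Theta^a w^0, \Theta^b w^0\rangle$ collapses to $\langle w^0, \Theta^{b-a} w^0\rangle$, which is simply the $(1,1)$-entry of $\Theta^{b-a}$, namely $\cos((b-a)\theta)$.

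I would first dispose of the simpler identity (the one involving $w^*=u^*=0$), since it reduces to showing $\langle w^i, u^i\rangle = 0$. Plugging in the closed forms,
\[
\langle w^i, u^i\rangle = \beta^{2i-1}\langle \Theta^i w^0,\ \Theta^{i-1}(I-\beta\Theta) w^0\rangle = \beta^{2i-1}\langle w^0, (\Theta^{-1} - \beta I) w^0\rangle,
\]
and since the $(1,1)$-entry of $\Theta^{-1}$ is $\cos\theta = \beta$, this vanishes.

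Next I would tackle the off-diagonal identity. Expanding $\langle w^j - w^i, u^j - u^i\rangle$ gives four terms: two "diagonal" ones of the form $\langle w^k, u^k\rangle$, already zero by the previous step, plus two cross terms. Using $\Theta^T=\Theta^{-1}$ and writing $k := j-i > 0$, the cross terms collapse to
\[
\beta^{i+j-1}\bigl[\cos((k+1)\theta) + \cos((k-1)\theta) - \beta\bigl(\cos((-k)\theta) + \cos(k\theta)\bigr)\bigr].
\]
Invoking the sum-to-product identity $\cos((k+1)\theta)+\cos((k-1)\theta) = 2\cos(k\theta)\cos\theta = 2\beta\cos(k\theta)$, together with $\cos(-k\theta)=\cos(k\theta)$, the bracket cancels to zero.

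Conceptually there is no real obstacle, since everything is forced by the explicit trigonometric form; the only thing to watch is bookkeeping of signs and exponents when swapping $\Theta$ past the inner product. The identity \eqref{Theta_property} given in the excerpt could be used as an equivalent substitute for the sum-to-product step on the $\beta$-multiplied pair, which might make the presentation cleaner if one prefers matrix identities to trigonometric ones.
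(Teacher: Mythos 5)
Your proof is correct and uses the same basic machinery as the paper: express everything via the closed forms $w^k = \beta^k \Theta^k w^0$ and $u^k = \beta^{k-1}\Theta^{k-1}(I-\beta\Theta)w^0$, move powers of $\Theta$ across the inner product via $\Theta^T = \Theta^{-1}$, and reduce to cosines. The one organizational difference is in the off-diagonal case. The paper writes $w^j = \beta^{j-i}\Theta^{j-i}w^i$ and $u^j = \beta^{j-i}\Theta^{j-i}u^i$ and then uses \eqref{Theta_property} to show that the entire expression $\langle w^j - w^i,\, u^j - u^i\rangle$ collapses to a scalar multiple of $\langle w^i, u^i\rangle$, namely $\bigl(\beta^{2(j-i)} + 1 - 2\beta^{j-i}\cos((j-i)\theta)\bigr)\langle w^i,u^i\rangle$, after which it quotes $\langle w^i,u^i\rangle = 0$ a second time and never needs to evaluate the scalar prefactor. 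You instead expand into four inner products, kill the two diagonal ones with the first computation, and cancel the two cross terms directly via the sum-to-product identity $\cos((k+1)\theta)+\cos((k-1)\theta)=2\cos(k\theta)\cos\theta$. Both are airtight; the paper's factorization is marginally more economical since it reuses the single fact $\langle w^i,u^i\rangle = 0$ rather than invoking a separate trigonometric identity, and your closing remark about substituting \eqref{Theta_property} is precisely the bridge between your presentation and theirs.
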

\begin{proof}
First, for any $1\leq i\leq N+1$, we have
\begin{align*}
\langle w^i - w^*, u^i - u^*\rangle
&=\langle w^i, u^i\rangle \\
&= \langle w^i, w^{i-1} - w^i \rangle \\
&= \langle \beta^i \Theta^i w^0, \beta^{i-1} \Theta^{i-1} (I - \beta \Theta) w^0 \rangle \\
&= \beta^{2i-1} \langle  \Theta w^0, (I - \beta \Theta) w^0 \rangle \\
&= \beta^{2i-1} \left(\langle  \Theta w^0, w^0 \rangle - \beta  \langle  \Theta w^0, \Theta  w^0 \rangle\right)\\
&= \beta^{2i-1} \left(\langle  \Theta w^0, w^0 \rangle - \beta   \right)\\
&= 0,
\end{align*}
where in the third ``$=$'' we used the fact that $\Theta^T = \Theta^{-1}$, while the last ``$=$'' follows from $\langle  \Theta w^0, w^0 \rangle = \|\Theta w^0\|\|\theta\| \cos \theta= \beta$.
Then, for $1\leq i < j \leq N+1$,  we have
\begin{align*}
& \langle w^j - w^i, u^j - u^i \rangle \\
=&  \langle \beta^{j-i} \Theta^{j-i} w^i - w^i, \beta^{j-i} \Theta^{j-i} u^i - u^i \rangle \\
=&  \langle \beta^{j-i} \Theta^{j-i} w^i, \beta^{j-i} \Theta^{j-i} u^i \rangle - \langle \beta^{j-i} \Theta^{j-i} w^i,   u^i \rangle - \langle w^i, \beta^{j-i} \Theta^{j-i} u^i\rangle + \langle   w^i,  u^i \rangle \\
=&  \beta^{2(j-i)} \langle w^i, u^i \rangle - \beta^{j-i} \langle (\Theta^{j-i} + \Theta^{i-j}) w^i, u^i \rangle   + \langle    w^i,  u^i \rangle \\
=&   (\beta^{2(j-i)} + 1) \langle w^i, u^i \rangle - \beta^{j-i} \langle (\Theta^{j-i} + \Theta^{i-j}) w^i, u^i \rangle     \\
=& (\beta^{2(j-i)} + 1) \langle w^i, u^i \rangle - 2\beta^{j-i} \cos( (j-i)\theta)   \langle w^i, u^i \rangle      \\
=&   0,
\end{align*}
where the last but one ``$=$'' follows from \eqref{Theta_property} and the last  ``$=$'' is because
$\langle w^i, u^i\rangle =0$ for $1\leq i \leq N+1$.
\end{proof}

It follows from \eqref{example-inequalities}, \cite[Fact 1]{RTBG18} and \cite[Theorem 20.21]{BC17book} that there exists a maximally monotone operator $A\in {\cal M}$ such that the set $\{(w^k,u^k): k=1,2,\ldots,N+1\} \cup \{(w^*,u^*)\}$ defined in \eqref{example} satisfies
\[
\{(w^k,u^k): k=1,2,\ldots,N+1\} \cup \{(w^*,u^*)\} \subseteq \text{graph}(A),
\]
i.e., $\{(w^k,u^k): k=1,2,\ldots,N+1\} \cup \{(w^*,u^*)\}$ is ${\cal M}$-interpolable.  Since $u^k = w^{k-1} - w^k\in A(w^k)$ for $k=1,2,\ldots,N+1$ and $A$ is maximally monotone, this implies that $\{w^k: k=1,2,\ldots,N+1\}$ obeys the PPA scheme
\[
w^{k+1} = J_A(w^{k}), \quad \forall\, k=0,\ldots, N.
\]
In hindsight, we found that $A$ admits an explicit formula given by
$$A(w) = \tan(\theta)
\begin{pmatrix}
0 & 1 \\
-1 & 0 \\
\end{pmatrix}
w, \quad w\in\R^2.
$$
Apparently, $A$ is continuous and satisfies $\langle Aw, w\rangle = 0$ for any $w\in\R^2$, which is sufficient to guarantee that $A$ is maximally monotone. Furthermore, it is easy to show that $J_A  =  \beta \Theta$.
Finally, we recall that the fixed-point residual $e(w,\lambda)$ is defined in \eqref{def:e}.
Then, it is elementary to verify that
\begin{align*}
\varepsilon(A,w^0,w^*,1,N) &= {\|e(w^N,1)\|^2 \over \|w^0 - w^*\|^2} =  \|w^N - w^{N+1}\|^2  \\
&= \|\beta^N  \Theta^N w^0 - \beta^{N+1} \Theta^{N+1} w^0\|^2 \\
&= \beta^{2N} \langle w^0 - \beta \Theta w^0, w^0 - \beta \Theta w^0 \rangle \\
&= \beta^{2N} (1 -2 \beta \langle w^0, \Theta w^0\rangle + \beta^2 \langle \Theta w^0, \Theta w^0 \rangle)  \\
&= \beta^{2N} (1 - \beta^2).
\end{align*}
By maximizing the above function over $\beta$, we derive that $\beta=\cos\theta=\sqrt{N\over N+1}$ and $$\varepsilon(A,w^0,w^*,1,N)={1 \over \big(1+{1\over N}\big)^N} {1 \over N+1}.$$
In summary, we have constructed a two-dimensional example, which gives the following lower bound
\begin{equation}
  \label{zeta-N-lb}
\zeta(N) \geq {1 \over (1+{1\over N})^N} {1 \over N+1},\quad \forall\, N >0.
\end{equation}
Since the constructed example lives in $\R^2$, the lower bound inequality given in \eqref{zeta-N-lb} is valid for any $n\geq 2$.
%
%
%
%

\section{Concluding remarks}\label{sc:conclusions}
PPA is a fundamental algorithm framework in convex optimization as well as monotone operator theory.
By using the performance estimation framework recently developed in  \cite{DT14mp,THG17mp,THG17siopt,dKGT17ol,THG18jota}, we established an upper bound \eqref{zeta-N-ub} on the exact worst case convergence rate $\zeta(N)$. We also constructed a two-dimensional example to provide a lower bound \eqref{zeta-N-lb} on $\zeta(N)$. Since the upper bound in \eqref{zeta-N-ub} coincides with the lower bound in  \eqref{zeta-N-lb}, we thus have shown that the exact worst case convergence rate $\zeta(N)$ is exactly ${1 \over (1+{1\over N})^N} {1 \over N+1}$  for any $n\geq 2$.
This also implies that the dual feasible solution constructed in \eqref{def:dual-sol} is in fact optimal for the dual SDP \eqref{SDP-dual}. Since $\lambda_{i,j}=0$ for $j-i\geq 2$, this hints that the constraints  $\{\langle   A_{i,j}, X\rangle \geq 0: 1\leq i < j \leq   N, \, j-i\geq 2\}$ are likely not active.
Apparently, ${1 \over (1+{1\over N})^N} {1 \over N+1}$ is only an upper bound on $\zeta(N)$ for $n=1$.
The established bound is nonasymptotic since it holds for all integers $N>0$.
Although the improvement of the obtained bound over the known bound \eqref{bnd-HY12c} is only approximately a constant factor of $\exp(-1)$, it is optimal in terms both the order as well as the constants involved.
Undoubtedly, this sharpens our understanding to the fundamental PPA.


\def\cprime{$'$}

\begin{appendix}
\section{Proof of Proposition \ref{prop:dual-feasible}}
This appendix is devoted to prove that the matrix $M$ defined by \eqref{def:M}-\eqref{def:dual-sol} is positive semidefinite.
From the definitions of $\{A_{i,j}: 1\leq i<j\leq N+1\}$, $\{B_i: 1\leq i\leq N+1\}$ and $C$ in \eqref{def:Aij}, \eqref{def:Bi} and \eqref{def:C}, respectively, we have
\begin{subequations}\label{ABC-nz}
  \begin{align}
A_{i,i+1}(i:i+2,i:i+2) &=
\begin{pmatrix}
0 & 1 & -1\\
1 & -4 & 3\\
-1 & 3 & -2
\end{pmatrix}, \quad i = 1,2,\ldots,N, \\
B_i(i:i+1,i:i+1) &=
\begin{pmatrix}
    0 & 1 \\
    1 & -2 \\
\end{pmatrix},
i = 1,2,\ldots, N+1, \\
C(N+1:N+2,N+1:N+2) &=
\begin{pmatrix}
    1 & -1 \\
    -1 & 1 \\
\end{pmatrix}.
\end{align}
\end{subequations}
It is then not difficult to verify from \eqref{def:M}, \eqref{def:dual-sol} and \eqref{ABC-nz} that $M$ is a pentadiagonal matrix whose nonzero entries are given by
\begin{eqnarray}\label{def:M-elements}
\left\{
\begin{array}{l}
M_{11} = \eta = {N^N \over (N+1)^N} {1 \over N+1}, \smallskip \\
M_{ii} =
\left\{
  \begin{array}{ll}
    4\lambda_1 + 2\mu_1, & \hbox{$i=2$}, \smallskip \\
    2\lambda_{i-2} + 4\lambda_{i-1} + 2\mu_{i-1}, & \hbox{$i=3,\ldots,N$},
  \end{array}
\right.  \smallskip\\
\;\;\;\quad = {N^{N+1-i} \over (N+1)^{N+1-i}} \frac{(i-1)(6N+2)}{(N+1)^2}, \quad i = 2,3,\ldots,N, \smallskip \\
M_{N+1,N+1} = 2\lambda_{N-1} + 4\lambda_{N} + 2\mu_{N} -1 = {5N^2 - 1 \over (N+1)^2}, \smallskip \\
M_{N+2,N+2} = 2\lambda_{N} + 2\mu_{N+1} -1 = 1, \smallskip \\
M_{1,2} = M_{2,1}  = -\lambda_1-\mu_1 = - {N^N \over (N+1)^N} {2 \over N+1}, \smallskip \\
M_{i,i+1} = M_{i+1,i} = -3\lambda_{i-1}-\lambda_{i} -\mu_{i} =  - {N^{N+1-i} \over (N+1)^{N+1-i}} {4i-2 \over  N+1 },   \quad i=2,3,\ldots,N, \smallskip\\
M_{N+1,N+2} = M_{N+2,N+1}   =   -3 \lambda_N -\mu_{N+1} + 1 = -{2N \over N+1}, \smallskip \\
M_{i,i+2} = M_{i+2,i} =  \lambda_{i} = {N^{N-i} \over (N+1)^{N-i}} {i \over N+1},  \quad i=1,2,\ldots,N.
\end{array}
\right.
\end{eqnarray}
If $N=1$, then
\[
M =
\begin{pmatrix}
    {1/4} & -{1/2} & {1/2} \\
    -{1/2} & 2 & -3 \\
    {1/2} & -3 & 8 \\
\end{pmatrix},
\]
which is positive definite. In the following, we assume that $N>1$.
Our proof of $M\succeq 0$ heavily relies on the following Schur complement lemma.
\begin{lemma}[Schur complement, {\cite{Hayn68}}]\label{lem:schur}
Let $k>0$ be an integer, $H \in \mathbb{S}^k$ be a real symmetric matrix, $b \in \mathbb{R}^k$ be a column vector and $c>0$ be a scalar.
Then
  \[
   \begin{pmatrix}
      H & b \\
      b^T & c \\
  \end{pmatrix}
   \succeq 0
  \text{~~if and only if~~}
   H - {bb^T\over c} \succeq 0.
  \]
\end{lemma}
\noindent
Schur complement lemma has several variants and generalizations, and Lemma \ref{lem:schur} is only one special case that
we will use repeatedly.
For this purpose, we fix our notation first.
Applying the Schur complement reduction as specified in Lemma \ref{lem:schur} to $M \in \mathbb{S}^{N+2}$ recursively for $j$ times, we obtain a matrix of order $N+2 - j$, which we denote by $M^{[N+2-j]} \in \mathbb{S}^{N+2 - j}$, $j = 0,1,\ldots,N-1$. Note that here the superscript $k$ in  $M^{[k]}$ corresponds to the order of $M^{[k]}$.
Since $M$ is a pentadiagonal matrix, applying Schur complement reduction as specified in Lemma \ref{lem:schur} to $M^{[k]}$ once is equivalent to deleting the $k$-th row and the $k$-th column of it and meanwhile updating $M^{[k]}(k-2:k-1,k-2:k-1)$ appropriately.
Specifically,  $M^{[k]}$ and $M^{[k-1]}$ appear as
\begin{equation}
\label{schur5}
M^{[k]} =
\begin{pmatrix}
       * & * & * &   &   &   \\
       * & \ddots & \ddots & \ddots &   &   \\
       * & \ddots & * & * & * &   \\
         & \ddots & * & \times & \times & \alpha \\
         &   & * & \times & \times & \beta \\
         &   &   & \alpha & \beta & \gamma \\
\end{pmatrix}_{k\times k} \text{\ and\ }
M^{[k-1]} =
\begin{pmatrix}
       * & * & * &   &       \\
       * & \ddots & \ddots & \ddots &       \\
       * & \ddots & * & * & *     \\
         & \ddots & * & \triangle & \triangle \\
         &   & * &  \triangle & \triangle  \\
\end{pmatrix}_{(k-1)\times (k-1)}
\end{equation}
where $\alpha = M^{[k]}_{k-2,k}$, $\beta = M^{[k]}_{k-1,k}$, $\gamma = M^{[k]}_{k,k}$, and
\[
M^{[k-1]}(k-2:k-1,k-2:k-1) =
\begin{pmatrix}
    M^{[k]}_{k-2,k-2} - {\alpha^2 \over \gamma} & M^{[k]}_{k-2,k-1} - {\alpha\beta\over \gamma} \smallskip \\
    M^{[k]}_{k-1,k-2} - {\alpha\beta\over \gamma} & M^{[k]}_{k-1,k-1} - {\beta^2 \over \gamma} \\
\end{pmatrix}.
\]
Note that all the $*$'s in both $M^{[k]}$ and $M^{[k-1]}$ are never touched and remain the same as those corresponding elements in the original matrix $M$.

For $j=0$, we have
\[
M^{[N+2-j]} = M^{[N+2]} = M =
\begin{pmatrix}
    M(1:N+1,1:N+1)  & M(1:N+1,N+2) \smallskip \\
    M(1:N+1,N+2)^T & M_{N+2,N+2} \\
\end{pmatrix}.
\]
Since $M_{N+2,N+2} = 1 > 0$, according to   Lemma \ref{lem:schur}, $M^{[N+2]}\succeq 0$ if and only if
\[
M^{[N+1]} := M(1:N+1,1:N+1) -  { M(1:N+1,N+2) M(1:N+1,N+2)^T \over M_{N+2,N+2}} \succeq 0.
\]
In view of (\ref{schur5}), and direct computation indicate that
\begin{align}\label{formula_(N+1)}
\nonumber
&
{M^{[N+1]}(:,\ N-1:N+1) }=M^{[N+1]}(N-1:N+1,\ :)^T \\
\nonumber
=&
{
\left(
  \begin{array}{lll}
  \multicolumn{3}{c}{O_{N-3,3}}\medskip\\
   M_{N-3,N-1} & 0 & 0\\
    M_{N-2,N-1} & M_{N-2,N} & 0 \medskip \\
    M_{N-1,N-1} & M_{N-1,N} & M_{N-1,N+1} \\
    M_{N,N-1}   & M_{N,N} - {M_{N,N+2}^2 \over M_{N+2,N+2}}  & M_{N,N+1} - {M_{N,N+2}M_{N+1,N+2} \over M_{N+2,N+2}}  \\
    M_{N+1,N-1} & M_{N,N+1} - {M_{N,N+2}M_{N+1,N+2} \over M_{N+2,N+2}}  & M_{N+1,N+1} -  {M_{N+1,N+2}^2 \over M_{N+2,N+2}}  \\
  \end{array}
\right)} \\
=&
{
\left(
  \begin{array}{lll}
  \multicolumn{3}{c}{O_{N-4,3}}\medskip\\
   {N^{3} \over (N+1)^{3}} {N-3 \over N+1} & 0 & 0\\
   - {N^3 \over (N+1)^3}{(4N-10) \over N+1}
   & {N^{2} \over (N+1)^{2}} {N-2 \over N+1}
   & 0 \smallskip\\
 {N^2 \over (N+1)^2} \frac{(N-2)(6N+2)}{(N+1)^2}
   & - {N^2 \over (N+1)^2}   {(4N-6) \over N+1}
   & {N \over N+1}  {N-1 \over N+1} \smallskip \\
   - {N^2 \over (N+1)^2}   {(4N-6) \over N+1} & {N\over N+1}{5N^2 - 5N - 2 \over (N+1)^2}
   & -{N(2N-2) \over (N+1)^2}  \smallskip  \\
    {N \over N+1}  {N-1 \over N+1}& -{N(2N-2) \over (N+1)^2}   & {N-1 \over N+1} \\
  \end{array}
\right).
}
\end{align}
It follows from $N>1$ that $M^{[N+1]}_{N+1,N+1} = {N-1 \over N+1} > 0$. Thus, $M^{[N+1]}\succeq 0$ if and only if  $M^{[N]}\succeq 0$.
In the following, we apply Schur complement reduction as specified in Lemma \ref{lem:schur} to $M^{[k]}$  and show inductively for $k=N+1, N, \ldots, 5$ that
$M^{[k]}_{k,k} > 0$ and
 \begin{align}\label{formula_(k)}
 \nonumber
& M^{[k]}(:,\ k-2:k)=M^{[k]}(k-2:k,\ :)^T\\
=&
\left(
  \begin{array}{lll}
  \multicolumn{3}{c}{O_{k-5,3}}\medskip\\
  {N^{N+4-k} \over (N+1)^{N+4-k}} {k-4 \over N+1} & 0 & 0\\
    - {N^{N+4-k} \over (N+1)^{N+4-k}} {(4k-14) \over N+1}
    & {N^{N+3-k} \over (N+1)^{N+4-k}} {k-3 \over N+1}
    & 0 \smallskip\\
    %
    {N^{N+3-k} \over (N+1)^{N+3-k}} \frac{(k-3)(6N+2)}{(N+1)^2}
      & - {N^{N+3-k} \over (N+1)^{N+3-k}}   {(4k-10) \over N+1}
      & {N^{N+2-k} \over (N+1)^{N+2-k}} {k-2 \over N+1} \smallskip \\
    %
   - {N^{N+3-k} \over (N+1)^{N+3-k}}   {(4k-10) \over N+1}
   & {N^{N+2-k} \over (N+1)^{N+2-k}} { (5k-11)N + (k-3) \over (N+1)^2}
   & -   {N^{N+2-k} \over (N+1)^{N+2-k}}  {2k-4 \over N+1}  \smallskip  \\
   %
   {N^{N+2-k} \over (N+1)^{N+2-k}}  {k-2 \over N+1} & -  {N^{N+2-k} \over (N+1)^{N+2-k}}  {2k-4 \over N+1}  & {N^{N+1-k} \over (N+1)^{N+1-k}}{k-2 \over N+1} \\
  \end{array}
\right).
\end{align}
It is already noted that $M^{[N+1]}_{N+1,N+1} > 0$.
 By comparing with \eqref{formula_(N+1)}, we see that \eqref{formula_(k)} holds for $k=N+1$.
Assume that \eqref{formula_(k)}  holds for $5 < k \leq N+1$. We will show that \eqref{formula_(k)} also holds for $k-1$.
Since $k>5$, we have  $M^{[k]}_{k,k} = {N^{N+1-k} \over (N+1)^{N+1-k}}{k-2 \over N+1} > 0$.
Furthermore, direct calculations show that
 \begin{align*}\label{formula_(k-1)}
 \nonumber
& M^{[k-1]}(:,\ k-3:k-1)= M^{[k-1]}(k-3:k-1, :)^T\\
\nonumber
=&
\left(
  \begin{array}{lll}
  \multicolumn{3}{c}{O_{k-6,3}}\medskip\\
   M_{k-5,k-3} & 0 & 0\\
    M_{k-4,k-3} & M_{k-4,k-2} & 0 \\
    M_{k-3,k-3} & M_{k-3,k-2} & M_{k-3,k-1} \\
     M_{k-2,k-3} & M_{k-2,k-2} - {M_{k-2,k}^2\over M^{[k]}_{k,k}}   &  M_{k-2,k-1} - { M_{k-2,k}M^{[k]}_{k-1,k} \over M^{[k]}_{k,k}}  \\
   M_{k-1,k-3} & M_{k-1,k-2} - {M_{k-2,k}M^{[k]}_{k-1,k}\over M^{[k]}_{k,k}} &   M^{[k]}_{k-1,k-1} - {(M^{[k]}_{k-1,k})^2 \over M^{[k]}_{k,k}}
  \end{array}
\right)\\
\nonumber
 =&
\left(
  \begin{array}{lll}
  \multicolumn{3}{c}{O_{k-6,3}}\medskip\\
  {N^{N+5-k} \over (N+1)^{N+5-k}} {k-5 \over N+1} & 0 & 0\\
 - {N^{N+5-k} \over (N+1)^{N+5-k}} {(4k-18) \over N+1}
    & {N^{N+4-k} \over (N+1)^{N+4-k}} {k-4 \over N+1}
    & 0 \smallskip\\
 {N^{N+4-k} \over (N+1)^{N+4-k}} \frac{(k-4)(6N+2)}{(N+1)^2}
      & - {N^{N+4-k} \over (N+1)^{N+4-k}}  {(4k-14) \over N+1}
      & {N^{N+3-k} \over (N+1)^{N+3-k}}  {k-3 \over N+1} \smallskip \\
 - {N^{N+4-k} \over (N+1)^{N+4-k}}   {(4k-14) \over N+1}
   & {N^{N+3-k} \over (N+1)^{N+3-k}}{ (5k-16)N + (k-4) \over (N+1)^2}
   & -  {N^{N+3-k} \over (N+1)^{N+3-k}}  {2k-6 \over N+1}  \smallskip  \\
  {N^{N+3-k} \over (N+1)^{N+3-k}} {k-3 \over N+1} & - {N^{N+3-k} \over (N+1)^{N+3-k}}  {2k-6 \over N+1}  & {N^{N+2-k} \over (N+1)^{N+2-k}}{k-3 \over N+1} \\
  \end{array}
\right).
\end{align*}
That is, \eqref{formula_(k)} also holds for $k-1$. In summary, \eqref{formula_(k)} holds for all $k = N+1,N,\ldots,5$, and $M\succeq 0$ if and only if $M^{[5]}\succeq 0$. Since $M^{[5]}_{55} = {N^{N-4} \over (N+1)^{N-4}} {3 \over N+1} >0$, $M^{[5]}\succeq 0$ if and only if
\[
M^{[4]} =
{
\left(
  \begin{array}{llll}
    {N^{N} \over (N+1)^{N}} {1 \over N+1}
   & - {N^{N} \over (N+1)^{N}} {2\over N+1}
   & {N^{N-1} \over (N+1)^{N-1}} {1\over N+1}
   & 0 \smallskip \\
    - {N^{N} \over (N+1)^{N}} {2\over N+1}
    & {N^{N-1} \over (N+1)^{N-1}} \frac{(6N+2)}{(N+1)^2}
    & - {N^{N-1} \over (N+1)^{N-1}} {6 \over N+1}
    & {N^{N-2} \over (N+1)^{N-2}} {2 \over N+1} \smallskip\\
    {N^{N-1} \over (N+1)^{N-1}} {1\over N+1}
    & - {N^{N-1} \over (N+1)^{N-1}} {6 \over N+1}
    & {N^{N-2} \over (N+1)^{N-2}} \frac{9N+1}{(N+1)^2}
      & - {N^{N-2} \over (N+1)^{N-2}}   {4 \over N+1}  \smallskip \\
   0
   & {N^{N-2} \over (N+1)^{N-2}} {2 \over N+1}
   & - {N^{N-2} \over (N+1)^{N-2}}   {4 \over N+1}
   &  {N^{N-3} \over (N+1)^{N-3}} { 2 \over N+1}   \\
  \end{array}
\right)\succeq 0.
}
\]
Since $M^{[4]}_{4,4}>0$, $M^{[4]}\succeq 0$ if and only if $M^{[3]}\succeq 0$.
One more step computation shows that
\[
M^{[3]} =
{
\left(
  \begin{array}{lllll}
    {N^{N} \over (N+1)^{N}} {1 \over N+1}
   & - {N^{N} \over (N+1)^{N}} {2\over N+1}
   & {N^{N-1} \over (N+1)^{N-1}} {1\over N+1}  \smallskip \\
    - {N^{N} \over (N+1)^{N}} {2\over N+1}
    & {N^{N-1} \over (N+1)^{N-1}} \frac{4N}{(N+1)^2}
    & - {N^{N-1} \over (N+1)^{N-1}} {2 \over N+1}  \smallskip\\
    {N^{N-1} \over (N+1)^{N-1}} {1\over N+1}
    & - {N^{N-1} \over (N+1)^{N-1}} {2 \over N+1}
    & {N^{N-2} \over (N+1)^{N-2}} \frac{1}{N+1}   \\
  \end{array}
\right).
}
\]
Since $M^{[3]}_{3,3}>0$, $M^{[3]}\succeq 0$ if and only if $M^{[2]}\succeq 0$.
One further step computation shows that $M^{[2]} = O_{2,2}$, the two by two zero matrix,
which is positive semidefinite. In summary, we have shown that, for any integer $N>0$, $M$ is  positive semidefinite  by recursively using the Schur complement reduction procedure in Lemma \ref{lem:schur}.
\end{appendix}

\end{document}